\newtheorem{theorem}{Theorem}[section]
\newtheorem{lemma}[theorem]{Lemma}
\newtheorem{proposition}[theorem]{Proposition}
\theoremstyle{definition}
\newtheorem{definition}[theorem]{Definition}
\newtheorem{remark}[theorem]{Remark}
\numberwithin{equation}{section}
\newcommand{\gras}[1]{{\mathbb #1}}
\newcommand{\R}{\gras{R}}
\newcommand{\OB}{\mathop {OB}}
\newcommand{\id}{\mathop {id}}
\def\elem(#1,#2){  \{ \frac{#1}  {\overline {\ #2\ }} \} }
\title[Contact open books with exotic pages]
    {Contact open books with exotic pages}
\author{Burak Ozbagci}
    \address{Department of Mathematics, Ko\c{c} University, Rumelifeneri Yolu,
         34450, Sariyer, Istanbul, Turkey}
     \email{bozbagci@ku.edu.tr}
\author{Otto van Koert}
   \address{Department of Mathematics and Research Institute of Mathematics, Seoul National University,
Building 27, room 402, San 56-1, Sillim-dong, Gwanak-gu, Seoul,
South Korea, Postal code 151-747}
   \email{okoert@snu.ac.kr}
\begin{document}

\begin{abstract}
   We consider a fixed contact $3$-manifold that admits infinitely many compact Stein fillings which are all homeomorphic but pairwise
   non-diffeomorphic. Each of these fillings gives rise to a closed contact $5$-manifold described as a contact open book whose page is the filling at hand
   and whose monodromy is the identity symplectomorphism. We show that
   the resulting infinitely many contact $5$-manifolds  are all diffeomorphic but pairwise
   non-contactomorphic. Moreover, we explicitly determine these contact $5$-manifolds.

\end{abstract}

\maketitle


\section{Introduction}
\label{intro}

Recent advances in symplectic geometry and topology showed that
while some closed contact $3$-manifolds have only finitely many
Stein fillings, others have infinitely many, up to diffeomorphism
(see \cite{oz} for a recent survey). Among the $4$-manifold
topologists, it is common to call a Stein filling of a contact
$3$-manifold exotic compared to another filling, if these two
fillings are homeomorphic but non-diffeomorphic.

The first examples of a closed contact $3$-manifold admitting
 infinitely many exotic \emph{simply-connected} Stein fillings were
discovered in \cite{aems}. The Stein fillings in that article, and
many others which appeared in the literature  since then, were given
as Lefschetz fibrations over the disk whose boundary is a fixed open
book supporting the contact manifold in question.

Recently,  Akbulut and Yasui \cite{ay} constructed an infinite
family of exotic simply-connected Stein fillings of a fixed contact
$3$-manifold, where the fillings (with $b_2=2$) are described by
explicit handlebody diagrams, rather than Lefschetz fibrations. In
this paper, we consider the infinite set of contact open books each
of which has a fixed exotic Stein filling in their family as its
page and identity as its monodromy, and show that the resulting
closed contact $5$-manifolds are all diffeomorphic but pairwise
non-contactomorphic.

Moreover, we give two alternative arguments to distinguish the
contact $5$-manifolds: one is based on the Barden's classification
\cite{ba} of simply-connected closed $5$-manifolds, and the other is
based on the diagrammatic language developed for contact
$5$-manifolds in \cite{dgvk}. The advantage of the latter approach
is that we can explicitly identify the contact $5$-manifolds.

\section{Simply-connected exotic Stein fillings}
\label{exot}

We briefly review the infinite family of exotic simply-connected
Stein fillings of a fixed contact $3$-manifold due to Akbulut and
Yasui \cite{ay}. Let $X$ be the $4$-manifold with boundary described
by the handlebody diagram on the left in Figure~\ref{diag}. Note
that there is an embedded $T^2 \times D^2$ in the interior of $X$.
Throughout this section, let $p$ denote a positive integer and let
$X_p$ be the result of $p$-log transform on the $T^2 \times D^2
\subset X$. A handlebody diagram of $X_p$ is presented on the right
in Figure~\ref{diag}.

First of all, one observes that  $X$ and $X_p$ are simply-connected,
for all $p$.  To see this, cancel the upper $1$-/$2$-handle pair and
the lower $1$-/$2$-handle pair  to get a diagram consisting of only
two $2$-handles and no $1$-handles  for both $X$ and $X_p$. This
already implies that $b_2(X)=b_2(X_p)=2$, and allows one to easily
compute the intersection forms of $X$ and $X_p$, which turns out to
be unimodular and indefinite, for all $p$. Such forms are
classified, up to isomorphism, by their rank, signature and parity.
The signature of all the forms are zero, the form of $X$ is even,
and the form of $X_p$ is even if and only if $p$ is odd. Thus, by
Boyer's generalization \cite{bo} of Freedman's celebrated theorem
\cite{f}, one concludes that $X_p$ is homeomorphic to $X_{p'}$ if
and only if $p$ and $p'$ have the same parity and $X_p$ is
homeomorphic to $X$ if and only if $p$ is odd. Moreover, for all
$p$,  $\partial X=
\partial X_p$ is a homology $3$-sphere.

\begin{figure}[htp]
\def\svgwidth{0.75\textwidth}%
\begingroup\endlinechar=-1
\resizebox{0.75\textwidth}{!}{%
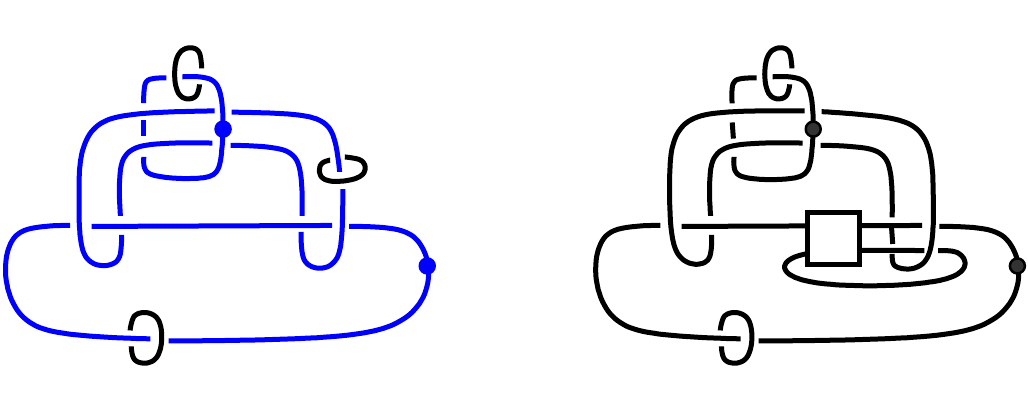%
}\endgroup
\caption{On the left: the handlebody diagram of $X$, where $T^2 \times D^2 \subset X$ is
        indicated with blue color. On the right: the handlebody
        diagram of $X_p$.}
\label{diag}
\end{figure}

Next, one shows that $X$ and $X_p$ admit Stein structures, for all
$p$, by turning the smooth handlebody diagrams in Figure~\ref{diag}
into Legendrian handlebody diagrams (see Figure~\ref{fig:stein}),
after cancelling the upper $1$-/$2$-handle pairs for convenience.

\begin{figure}[htp]
\def\svgwidth{0.75\textwidth}%
\begingroup\endlinechar=-1
\resizebox{0.75\textwidth}{!}{%
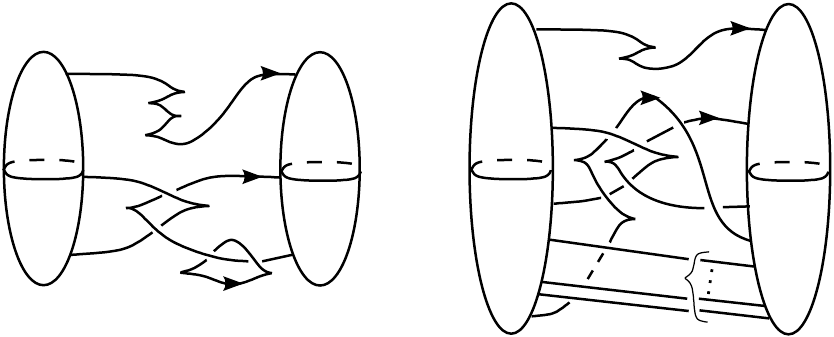%
}\endgroup \caption{Stein handlebody diagrams for $X$ and $X_p$ }
\label{fig:stein}
\end{figure}

In particular, $\partial X=\partial X_p$ admits a Stein fillable
contact structure. Finally, one uses the adjunction inequality
coupled with the genus function to distinguish the smooth structures
on $X_p$'s:

\begin{proposition}[Akbulut and Yasui] \label{exotpair} (i) There exists a contact structure $\eta$ on $\partial X$ such that
for infinitely many values of $q \geq 1$, $X_{2q-1}$ is a Stein
filling of $(\partial X, \eta)$ with the property that all of these
fillings are homeomorphic  but pairwise non-diffeomorphic.

(ii) There exists a contact structure $\eta'$ on $\partial X$ such
that for infinitely many values of $q \geq 1$, $X_{2q}$ is a Stein
filling of $(\partial X, \eta')$ with the property that all of these
fillings are homeomorphic  but pairwise non-diffeomorphic.
\end{proposition}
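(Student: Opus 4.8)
The plan is to assemble four ingredients: a canonical diffeomorphism $\partial X_p\cong\partial X$ valid for every $p$; the fact that the Stein structures of Figure~\ref{fig:stein} all induce, under this identification, one fixed contact structure $\eta$ on $\partial X$; the homeomorphism classification within a parity class; and, carrying the real content, the smooth distinction of infinitely many of the $X_p$. The first three, combined with the intersection-form computation and Boyer's theorem already recalled, are essentially bookkeeping, so the weight of the proof falls on the last point.

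First I would recall that a $p$-log transform is carried out on the torus $T^2\times D^2$ sitting in the \emph{interior} of $X$: one excises this neighborhood and reglues $T^2\times D^2$ by a self-diffeomorphism of $T^3=\partial(T^2\times D^2)$ that wraps the meridional direction $p$ times around. Since this is surgery along an interior submanifold, supported in a compact set disjoint from a collar of $\partial X$, there is a tautological identification $\partial X_p\cong\partial X$. At the level of the handlebodies of Figure~\ref{fig:stein} the transform merely re-twists the $2$-handle carried by the torus, which changes the $4$-manifold and its intersection form but leaves the boundary $3$-manifold fixed; a further inspection of the Legendrian handlebodies shows that the contact structures inherited from the Stein structures coincide, so that all the $X_p$ fill a single $(\partial X,\eta)$. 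Meanwhile the intersection forms, computed after the handle cancellations described above, are indefinite unimodular of rank $2$ and signature $0$; they are even for $X$ and for $X_p$ with $p$ odd, and odd for $X_p$ with $p$ even. Hence, since $\partial X$ is a homology $3$-sphere, Boyer's theorem gives that the $X_{2q-1}$ are mutually homeomorphic and homeomorphic to $X$, and the $X_{2q}$ are mutually homeomorphic. Taking $\eta'=\eta$, this proves everything in (i) and (ii) except the non-diffeomorphism.

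For the smooth distinction I would use the diffeomorphism invariance of the genus function
\[
G_{X_p}\colon H_2(X_p;\Z)\longrightarrow\Z_{\ge0},\qquad G_{X_p}(\sigma)=\min\{\,g(\Sigma):\Sigma\hookrightarrow X_p\ \text{connected oriented},\ [\Sigma]=\sigma\,\},
\]
bounded below by the adjunction inequality. The construction is arranged so that $X\subset Z$ sits in a closed elliptic surface $Z$ with $T^2\times D^2$ a fiber neighborhood, whence the $p$-log transform turns $Z$ into a closed elliptic surface $Z_p\supset X_p$ with a multiple fiber of multiplicity $p$; by Fintushel--Stern its Seiberg--Witten basic classes --- in particular the canonical class $K_{Z_p}$ --- form an explicit, unbounded function of $p$. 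One then selects a class $\sigma_p\in H_2(X_p;\Z)$ that corresponds to itself under the homeomorphisms above, is represented by a surface lying inside $X_p\subset Z_p$, and pairs nontrivially with the multiple fiber; for such a class the adjunction inequality in $Z_p$ gives
\[
2\,G_{X_p}(\sigma_p)-2\ \ge\ \sigma_p^2+\bigl|\langle K_{Z_p},\sigma_p\rangle\bigr|,
\]
with right-hand side growing linearly in $p$, while the handlebody picture of Figure~\ref{fig:stein} exhibits, for the small values of $p$, a surface of controlled genus in the corresponding class. Since diffeomorphic $4$-manifolds have equal genus functions, comparing the lower bound for $X_p$ with the realized genus for $X_{p'}$ along a cofinal subset of $p$ in each parity class produces infinitely many pairwise non-diffeomorphic members --- exactly the content of (i) and (ii).

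The step I expect to be the main obstacle is this Seiberg--Witten input: pinning down the closed elliptic surface $Z_p$ into which $X_p$ embeds, dealing with $b_2^+(Z_p)$ (if it equals $1$ one must use the K\"ahler-chamber invariant and its wall-crossing correction), computing the basic classes as a function of the multiplicity, and verifying that the tracked class $\sigma_p$ is simultaneously controlled by the homeomorphisms and genuinely constrained by adjunction. By contrast, the identification $\partial X_p\cong\partial X$, the invariance of the contact structure under the log transform, and the homeomorphism classification are formal once the handle moves of Figure~\ref{fig:stein} and the classification results quoted above are in hand.
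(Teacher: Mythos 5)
Your outline of the topological part (boundary unchanged because the log transform is an interior surgery; intersection forms of rank $2$, signature $0$, with parity governed by $p$; Boyer's theorem over the homology sphere boundary) matches what the paper records, and your "adjunction inequality plus genus function" slogan for the smooth distinction is the same one-line description the paper gives before deferring the proof to \cite{ay}. But there is a genuine gap at the step that is actually the delicate content of the proposition: the claim that "a further inspection of the Legendrian handlebodies shows that the contact structures inherited from the Stein structures coincide," so that all $X_p$ (and even both parity families, via $\eta'=\eta$) fill a single contact structure. No argument is given, and none is available by inspection: the Legendrian diagrams for different $p$ are genuinely different Stein structures (indeed $\langle c_1(TX_p),S_p\rangle=-1-p$ depends on $p$), and identifying their boundaries requires smooth Kirby moves that do not respect the contact data. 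Deciding whether two Stein fillable contact structures on a fixed homology sphere are isotopic is exactly the hard point here. This is also why the statement says "for infinitely many values of $q$" rather than "for all $q$": the argument of Akbulut and Yasui only produces an infinite subfamily within each parity class inducing one and the same contact structure (a finiteness/pigeonhole-type step), not equality for all $p$, and it keeps $\eta$ and $\eta'$ separate. Your stronger claim would need a proof you do not supply, and your proposal as written proves the proposition only modulo this missing step.

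A second, lesser gap is the Seiberg--Witten input. You posit that "the construction is arranged so that $X\subset Z$ sits in a closed elliptic surface $Z$ with $T^2\times D^2$ a fiber neighborhood," so that $X_p$ embeds in a log-transformed elliptic surface and Fintushel--Stern basic classes apply; nothing in the handlebody description of $X$ provides such an embedding, and you would have to construct it (and then still handle the $b_2^+=1$ chamber issues you mention). The argument the paper alludes to does not go through an ambient closed elliptic surface: it uses the genus function of the compact Stein domains themselves together with the adjunction inequality available for Stein surfaces (via their Legendrian handle descriptions and embeddings into closed K\"ahler surfaces), comparing adjunction lower bounds coming from the Stein structure on $X_p$ with explicitly constructed surfaces in $X_{p'}$. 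Your approach is in the same spirit, but as written it rests on an unverified geometric assumption, and the final comparison "along a cofinal subset" is too vague to certify pairwise non-diffeomorphism within each parity class. Note finally that the paper itself does not prove this proposition; it is quoted from \cite{ay}, so the benchmark here is whether your sketch could be completed, and at the two points above it currently cannot.
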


\section{Contact open books with exotic pages}
\label{sec:Contopen} Let $M_p:=OB(X_p, id)$ (resp. $M:=OB(X, id)$)
denote the closed contact $5$-manifold viewed as the contact open
book with page $X_p$ (resp. $X$) and monodromy the identity map. Let
$\xi_p$ (resp. $\xi$) denote the contact structure supported by this
open book in the sense of Giroux \cite{g}.

We start with some basic observations about the contact
$5$-manifolds $(M, \xi)$ and $(M_p, \xi_p)$. First of all, $(M,
\xi)$ and $(M_p, \xi_p)$ are subcritically Stein fillable, and in
fact $W:=X\times D^2$ and $W_p:=X_p\times D^2$ are their subcritical
Stein fillings, respectively \cite[Prop. 3.1]{dgvk}.

Since $X$ and $X_p$ are simply-connected for all $p$, so are $W$ and
$W_p$. Moreover, $M$ and $M_p$ are simply-connected for all $p$, as
a result of the following simple but useful lemma. Note that a
compact Stein filling (a.k.a. a Stein domain) is a Weinstein domain
(cf. \cite{ce}).

\begin{lemma}
If $V^{2n}$ is a Weinstein domain, then the inclusion map
$i:\partial V \to V$ induces an isomorphism on $\pi_1$, provided
that $n\geq 3$.

\end{lemma}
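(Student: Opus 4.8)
The statement is a Morse-theoretic fact about Weinstein domains, and the natural route is to use the handle decomposition coming from a defining Weinstein (or just Morse) function. A Weinstein domain $V^{2n}$ admits a Morse function $\phi$, gradient-like for the Liouville vector field, whose critical points all have index $\le n$; this is the defining property. The plan is to read off $\pi_1$ from the handle attachments and compare with the boundary $\partial V$.

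First I would recall that $V$ is built from a $0$-handle (or a collection of them, but connectedness lets us assume one) by successively attaching handles of index $\le n$. Handles of index $\ge 3$ do not change $\pi_1$, and handles of index $2$ can only add relations, never generators, while index-$1$ handles add generators. Since $n \ge 3$, \emph{all} the $2$-handles and higher are attached after the $1$-skeleton is complete; thus $\pi_1(V)$ is the fundamental group of the $1$-skeleton modulo the relations from the $2$-handles. The subtle point is the boundary: I would invoke the standard fact that for a compact manifold $V^{2n}$ with a handle decomposition with handles of index $\le n$, turning the decomposition upside down expresses $V$ from $\partial V$ by attaching handles of index $\ge 2n - n = n \ge 3$. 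Hence the inclusion $\partial V \hookrightarrow V$ is, up to homotopy, the inclusion of a subcomplex into a space obtained by attaching cells of dimension $\ge 3$, which induces an isomorphism on $\pi_1$ (cells of dimension $\ge 3$ affect only $\pi_k$ for $k \ge 2$).

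More precisely, the argument I would write is: let $V = \partial V \times [0,1] \cup (\text{handles of index} \ge n)$ from the dual handle decomposition. Attaching a handle of index $k$ to a space $Y$ along $S^{k-1} \times D^{2n-k}$ is homotopy equivalent to attaching a $k$-cell; by van Kampen, if $k \ge 3$ this does not change $\pi_1$ (the attaching sphere $S^{k-1}$ is simply connected and the complement of the cocore deformation retracts appropriately). Iterating over all the dual handles gives $\pi_1(\partial V) \xrightarrow{\ \cong\ } \pi_1(V)$, and one checks the isomorphism is induced by inclusion since at each stage $\partial V \times \{0\}$ sits inside the growing complex and the inclusions are compatible.

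The main obstacle — really the only place care is needed — is justifying that the \emph{dual} handle decomposition has handles of index $\ge n \ge 3$, i.e. that one may legitimately turn the Weinstein handle decomposition upside down. For a compact cobordism this is classical (Morse theory: $-\phi$ has critical points of index $2n - k$), but one must be slightly careful that $V$ is a manifold with boundary rather than a closed manifold or a cobordism between two nonempty boundaries: here $\partial V$ is the "outgoing" end and the $0$-handles are the "incoming" end, so $-\phi$ realizes $V$ as a cobordism from $\partial V$ to a collection of $2n$-balls (the dualized $0$-handles), with all intermediate handles of index between $n$ and $2n$. Filling in those top $2n$-handles at the end does not affect $\pi_1$ either. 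Once this is set up, the conclusion follows from van Kampen exactly as above, and the hypothesis $n \ge 3$ enters precisely to guarantee $2n - k \ge 3$ for every handle index $k \le n$ appearing.
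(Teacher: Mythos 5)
Your argument is correct, but it is a genuinely different route from the paper's. The paper works directly with the Weinstein geometry: a loop (for surjectivity) or a homotopy of loops (for injectivity) is put in general position with respect to the isotropic skeleton, which has dimension at most $n$, and is then pushed into a collar of $\partial V$ by the Liouville flow; the hypothesis $n\geq 3$ enters as the transversality count $2+n<2n$. You instead pass to the Weinstein handle decomposition with handles of index $\leq n$, turn it upside down to exhibit $V$ as $\partial V\times[0,1]$ with handles of index $\geq 2n-n=n\geq 3$ attached (plus the dualized $0$-handles of index $2n$), and conclude by van Kampen that cells of dimension $\geq 3$ do not change $\pi_1$. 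Both proofs rest on the same index bound $\leq n$ for critical points of a Lyapunov function (which, as you should note, is a consequence of the stable manifolds being isotropic rather than literally the definition), and both need exactly $n\geq 3$. What your version buys is a purely classical Morse-theoretic/algebro-topological argument that also delivers, with no extra work, the analogous statements in homology and higher connectivity of the pair $(V,\partial V)$; what the paper's version buys is that it avoids invoking the handle decomposition and dualization machinery altogether, working only with transversality against the skeleton and the Liouville flow, which keeps the proof short and self-contained at the level of generality used in the paper. Your care about the cobordism going from $\partial V$ to the dualized $0$-handles, and about the inclusion inducing the isomorphism at each stage, is exactly the right bookkeeping, so there is no gap.
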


\begin{proof}
Fix a base point $b$ in the boundary $\partial V$. Suppose that
$\gamma$ is a loop in $W$. We claim that $\gamma$ is homotopic to a
loop in $\partial V$, or in other words, that $i$ induces a
surjective map on $\pi_1$. First we perturb $\gamma$, a
$1$-dimensional CW-complex, to make it disjoint from the isotropic
skeleton of $V$, which has dimension at most $n$. Then apply the
Liouville flow, cut off near the boundary, to push $\gamma$ into a
collar neighborhood of the boundary.

For injectivity on $\pi_1$, suppose that $\gamma_1$ and $\gamma_2$
are loops in $\partial V$ that are homotopic in $V$. Such a homotopy
is a $2$-dimensional CW-complex, so we can make it disjoint from the
isotropic skeleton if $n\geq 3$. Then apply the Liouville flow to
push the entire homotopy into a collar neighborhood of the boundary.
\end{proof}

\begin{remark} When $n=2$, the map $i_*: \pi_1(\partial V) \to \pi_1 (V)$   is surjective \cite[Prop. 1.10]{st}, but not necessarily injective
as, for instance, $T^2 \times D^2$ is a Stein (hence Weinstein)
filling of the standard contact $T^3$.
\end{remark}

\begin{lemma}
\label{lemma:bar}  Let $S^2\tilde \times S^3$ denote the non-trivial
$S^3$-bundle over $S^2$. Then $M_p$ is diffeomorphic to either
$S^2\times S^3 \# S^2\times S^3$ or $ S^2\times S^3 \# S^2\tilde
\times S^3$.
\end{lemma}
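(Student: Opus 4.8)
The plan is to identify $M_p = OB(X_p, \mathrm{id})$ as a specific simply-connected $5$-manifold and then invoke Barden's classification to pin it down. First I would recall that an open book with page $P$ and trivial monodromy is diffeomorphic to the boundary of $P \times D^2$, i.e. $M_p \cong \partial(X_p \times D^2) = W_p$. Since $X_p$ is a compact simply-connected $4$-manifold with $b_2 = 2$, indefinite unimodular intersection form and $\partial X_p$ a homology sphere, I would compute the homology of $M_p$ via a Mayer--Vietoris argument on $\partial(X_p \times D^2) = (X_p \times S^1) \cup_{\partial X_p \times S^1} (\partial X_p \times D^2)$, or more directly: $H_*(W_p)$ agrees with $H_*(X_p)$, so $H_2(M_p) \cong \Z^2$, $H_1(M_p) = 0$ (by the previous lemma), and the manifold is spin or not according to the parity of the intersection form of $X_p$. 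The upshot is that $M_p$ is a closed, simply-connected, spin-or-not $5$-manifold with $H_2 = \Z^2$ and (being a boundary of a parallelizable-up-to-one-handle $6$-manifold, or by direct computation of the linking form, which vanishes since $H_2$ is free) trivial torsion linking form.

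Next I would apply Barden's theorem: a simply-connected closed $5$-manifold with $H_2 = \Z^k$ torsion-free is determined by $k$ together with one further $\Z/2$-invariant, namely whether the manifold is spin; the non-spin ones are connected sums of copies of $S^2 \times S^3$ with exactly one copy of the twisted bundle $S^2 \tilde\times S^3$. So with $H_2(M_p) = \Z^2$ we get either $S^2 \times S^3 \# S^2 \times S^3$ (spin case) or $S^2 \times S^3 \# S^2 \tilde\times S^3$ (non-spin case), which is exactly the claimed dichotomy. To make the spin/non-spin determination concrete, I would relate $w_2(M_p)$ to $w_2(W_p)$, and observe that $W_p = X_p \times D^2$ is spin iff $X_p$ is spin iff the intersection form of $X_p$ is even iff $p$ is odd; restricting to the boundary, $M_p$ is spin iff $p$ is odd — though for the statement as phrased one only needs that \emph{one} of the two cases occurs, so this refinement is optional here.

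The main obstacle I expect is the clean verification that $M_p$ satisfies the hypotheses of Barden's classification — specifically that its second Stiefel--Whitney class and its torsion linking form are the only obstructions, and that the $H_2$-with-torsion subtleties in Barden's list do not arise. Since $H_2(M_p)$ is free of rank $2$, the linking form is automatically trivial and Barden's invariant reduces to the single bit $w_2$, so the argument is really just: (1) $M_p \cong \partial(X_p \times D^2)$; (2) homology computation giving $\pi_1 = 0$, $H_2 = \Z^2$ torsion-free; (3) quote Barden. A secondary point requiring a line of care is justifying that the open book with identity monodromy is diffeomorphic to $\partial(\text{page} \times D^2)$ in this contact/Weinstein setting, but this is standard and already implicit in the cited \cite[Prop. 3.1]{dgvk}.
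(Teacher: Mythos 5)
Your proposal is correct and follows essentially the same route as the paper: identify $M_p$ with $\partial(X_p\times D^2)$, show it is simply connected with torsion-free $H_2\cong\mathbb{Z}\oplus\mathbb{Z}$, and quote Barden's classification, which leaves exactly the two listed connected sums. The only minor difference is that the paper gets $H_2(M_p)\cong H_2(W_p)\cong H_2(X_p)\cong\mathbb{Z}^2$ from the long exact sequence of the pair $(W_p,\partial W_p)$ together with the vanishing of the (co)homology of the subcritical filling in high degrees, whereas your ``more direct'' phrasing ($H_*(W_p)$ agrees with $H_*(X_p)$, hence $H_2(M_p)\cong\mathbb{Z}^2$) glosses over exactly this boundary-versus-filling step --- but the Mayer--Vietoris computation you offer, using that $\partial X_p$ is a homology sphere, legitimately fills it.
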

\begin{proof} For the subcritical Stein filling $W_p$, the homology sequence$$
\underset{\cong H^3(W_p)=0}{H_3(W_p,\partial W_p)} \longrightarrow
H_2(\partial W_p) \stackrel{i_*}{\longrightarrow} H_2( W_p)
\longrightarrow \underset{\cong H^4(W_p)=0}{H_2(W_p,\partial W_p)},
$$ of the pair $(W_p,
\partial W_p)$ implies that $i_*$ is an isomorphism, since the homology of
subcritical Weinstein manifolds vanishes in degree at least half the
dimension. As a consequence we have
$$H_2 (M_p) \cong H_2(W_p)\cong H_2(X_p)\cong \mathbb{Z} \oplus \mathbb{Z}.$$

The statement in the lemma follows from Barden's classification
\cite{ba} of diffeomorphism classes of simply-connected closed
$5$-manifolds. Note that $S^2\tilde \times S^3 \# S^2\tilde \times
S^3$ is diffeomorphic to $S^2\times S^3 \# S^2 \tilde \times S^3$.
\end{proof}

Next, in order to identify the contact $5$-manifold $(M_p, \xi_p)$,
for all $p$, we would like to determine the first Chern class
$c_1(\xi_p)$ using the following  general results. Let $c$
denote the total Chern class.

\begin{lemma}
\label{lemma:c1_strong_filling} Suppose that $(V,\omega)$ is a
strong symplectic filling of some closed contact manifold $(Y,\xi)$.
If $i: Y\to V$ denotes the inclusion map, then $i^*c(TV)=c(\xi)$.
\end{lemma}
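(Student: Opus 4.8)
The statement to prove is Lemma~\ref{lemma:c1_strong_filling}: if $(V,\omega)$ strongly fills $(Y,\xi)$ and $i:Y\to V$ is the inclusion, then $i^*c(TV)=c(\xi)$. Let me think about what's really being claimed and how to prove it.

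The key point: along $Y=\partial V$, the tangent bundle $TV|_Y$ splits. We have the contact hyperplane field $\xi\subset TY\subset TV|_Y$. Since $(V,\omega)$ is a strong filling, near $Y$ there's a Liouville vector field $Z$ transverse to $Y$, and the contact form is $\alpha = \iota_Z\omega|_Y$. The Reeb vector field $R$ lives in $TY$. So $TV|_Y = \xi \oplus \langle R\rangle \oplus \langle Z\rangle$ as a real bundle, but to compare Chern classes we need a complex structure. Choose an almost complex structure $J$ on $V$ compatible with $\omega$ (so $TV$ becomes a complex bundle, and $c(TV)$ is defined). Near $Y$ we can choose $J$ adapted to the filling so that $J Z = R$ (this is standard — one picks $J$ compatible with $\omega$ and with $\xi$, using that $\omega|_\xi$ is nondegenerate). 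Then $\langle R\rangle\oplus\langle Z\rangle$ is a trivial complex line subbundle of $TV|_Y$, spanned by $Z$ (with $JZ=R$), and $\xi$ is its complex complement.

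**Steps.** First, I would set up the linear algebra: on a strong filling, recall the collar neighborhood $(-\epsilon,0]\times Y$ with Liouville field $Z=\partial_t$ (or rescaled), $\alpha=\iota_Z\omega|_Y$ a contact form, $\xi=\ker\alpha$, and $\omega|_\xi$ a symplectic form on $\xi$. Second, choose $J$ compatible with $\omega$ on all of $V$; near $Y$ arrange $J$ so that it preserves $\xi$ (using a compatible complex structure on the symplectic bundle $(\xi,\omega|_\xi)$) and satisfies $JZ=R$ where $R$ is the Reeb field of $\alpha$. This makes $TV|_Y\cong \xi\oplus \mathbb{C}$ as complex bundles, where $\mathbb{C}=\langle Z,R\rangle$ is trivial. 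Third, conclude $c(TV|_Y)=c(\xi)\cdot c(\underline{\mathbb{C}})=c(\xi)$, and since $i^*c(TV)=c(i^*TV)=c(TV|_Y)$, we get $i^*c(TV)=c(\xi)$.

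**Main obstacle.** There is no deep obstacle — the only thing requiring care is making the almost complex structure choices precise and compatible. Specifically: the complex structure $J$ on $V$ (needed even to make sense of $c(TV)$ as a Chern class, via $\omega$-compatibility) must be homotopic, through compatible-or-at-least-nondegenerate structures, to one that is adapted to the contact boundary; since the space of $\omega$-compatible almost complex structures is contractible, any choice gives the same $c(TV)$, so we are free to use a boundary-adapted one. The one genuinely substantive line is that a strong filling, by definition of the induced contact structure, gives $\alpha=\iota_Z\omega|_{TY}$ with $\xi=\ker\alpha$ co-oriented so that $\omega|_\xi$ tames the induced $J|_\xi$, which is exactly what identifies $(\xi, J|_\xi)$ as the complex bundle whose Chern class is $c(\xi)$. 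I would state this identification carefully and then the Whitney formula finishes it. No page-specific input from the Akbulut–Yasui examples is needed; this is a general lemma, so the proof is purely about the local structure near the contact boundary of a strong filling.
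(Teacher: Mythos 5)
Your proof is correct and follows essentially the same route as the paper: both split $TV|_Y$ as $\xi$ plus the trivial rank-two bundle spanned by the Liouville field $Z$ and the Reeb field $R$, then apply the Whitney product formula. The only cosmetic difference is that you phrase the splitting via an adapted almost complex structure with $JZ=R$, while the paper states it as a symplectic splitting of vector bundles (with $\omega(Z,R)=1$ giving the symplectic frame), which amounts to the same thing.
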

\begin{proof}
There is a Liouville vector field $Z$ defined near $\partial V$ with
the Liouville form $\lambda:=i_Z \omega$. Its restriction
$\alpha:=\lambda|_{\partial V}$ defines a contact form. We get a map
$((-\epsilon,0]\times Y,d(e^t \alpha)\, )\to (V,\omega)$ by sending
$(t,p)$ to $Fl^Z_t(i(p))$, where $Fl^Z$ denotes the flow induced by
$Z$. This map preserves the symplectic structure. We conclude that
$TV|_{\partial V=Y}$ symplectically splits as
$$
(span_\R(Z,R) \oplus \xi,\omega_0\oplus \omega|_{\xi}).
$$
Here $R$ is Reeb vector field on the contact manifold $(Y,\alpha)$,
and $\omega_0$ defines the standard symplectic form. Note that $Z$,
$R$ forms a symplectic frame as $\omega(Z,R)=1$. The rank $2$
symplectic vector bundle $\epsilon=span_\R(Z,R)$ is clearly trivial,
so we see that
$$
c(i^*TV)=c(\epsilon\oplus \xi)=c(\epsilon)c(\xi)=c(\xi).
$$ \end{proof}

\begin{lemma} \label{chern}
Suppose $V^{2n}$ is a Weinstein domain with $n\geq 3$. Assume in
addition that $W$ is subcritical for $n=3$.  Then the inclusion map
$i:\partial V\to V$ induces an isomorphism on $H^2$. In particular,
if $(Y,\xi)$ is the contact boundary of $V$, then $c_1(\xi)$
determines and is determined by $c_1(TV)$.
\end{lemma}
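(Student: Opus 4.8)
The plan is to reduce everything to the long exact sequence of the pair $(V,\partial V)$ combined with Lefschetz duality, exploiting the defining feature of a Weinstein domain: $V$ admits a handle decomposition with handles of index at most $n$, equivalently it deformation retracts onto its isotropic skeleton, which has dimension at most $n$ (and at most $n-1$ when $V$ is subcritical). I will keep integral coefficients throughout, since that is where $c_1$ lives.

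First I would record the homological vanishing that comes for free from the skeleton: $H_j(V;\mathbb{Z})=0$ for $j>n$, and $H_j(V;\mathbb{Z})=0$ for $j\ge n$ in the subcritical case. Since $V$ is canonically oriented by the symplectic form, Lefschetz duality gives $H^k(V,\partial V;\mathbb{Z})\cong H_{2n-k}(V;\mathbb{Z})$. Taking $k=2$ yields $H^2(V,\partial V)\cong H_{2n-2}(V)=0$ whenever $2n-2>n$, i.e. $n\ge 3$. Taking $k=3$ yields $H^3(V,\partial V)\cong H_{2n-3}(V)$, which vanishes when $2n-3>n$, i.e. $n\ge 4$; in the borderline case $n=3$ we have $2n-3=n$, and it is precisely here that the hypothesis that $V$ is subcritical enters, forcing $H_3(V)=0$ and hence $H^3(V,\partial V)=0$.

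Next I would feed these two vanishing statements into the exact sequence of the pair,
$$
H^2(V,\partial V)\longrightarrow H^2(V)\stackrel{i^*}{\longrightarrow}H^2(\partial V)\longrightarrow H^3(V,\partial V),
$$
whose outer terms are now zero, so $i^*\colon H^2(V)\to H^2(\partial V)$ is an isomorphism. Finally, a Weinstein domain is in particular a strong symplectic filling of its contact boundary $(Y,\xi)$, so Lemma \ref{lemma:c1_strong_filling} gives $i^*c_1(TV)=c_1(\xi)$; since $i^*$ is an isomorphism on $H^2$, $c_1(\xi)$ and $c_1(TV)$ determine one another, which is the assertion.

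I do not anticipate a real obstacle: the argument is essentially bookkeeping. The one place that requires attention is the dimension count in the case $n=3$, which is exactly why the subcritical hypothesis is imposed there, together with the check that Lefschetz duality is applied with $\mathbb{Z}$-coefficients (legitimate because $V$ is oriented) so that the conclusion genuinely concerns integral $H^2$.
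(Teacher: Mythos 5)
Your proof is correct and follows essentially the same route as the paper: the long exact cohomology sequence of the pair $(V,\partial V)$, with the outer terms killed via Lefschetz duality and the bound on the dimension of the Weinstein skeleton (subcriticality handling the borderline $n=3$ term), followed by Lemma~\ref{lemma:c1_strong_filling}. You merely spell out the duality and dimension count a bit more explicitly than the paper does.
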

\begin{proof}
We just consider the long exact sequence of cohomology groups
$$
\underset{\cong H^{2n-2}(V)=0}{H^2(V,\partial V)} \longrightarrow
H^2(V) \stackrel{i^*}{\longrightarrow} H^2(\partial V)
\longrightarrow \underset{\cong H^{2n-3}(V)=0}{H^3(V,\partial V)}
$$
of the pair $(V, \partial V)$ to conclude that $i^*$ is an
isomorphism, assuming in addition that $V$ is subcritical for the
case $n=3$. Lemma~\ref{lemma:c1_strong_filling} then shows the last
claim.
\end{proof}

Finally, we are ready to prove the main result of the paper.

\begin{theorem} \label{mainthm}
If $p$ is odd, then $M_p$ (and hence $M$) is diffeomorphic to
$S^2\times S^3 \# S^2\times S^3$. If $p$ is even, then $M_p$ is
diffeomorphic to $S^2\times S^3 \# S^2 \tilde \times S^3$.
Furthermore, $(M_p, \xi_p)$ is contactomorphic to $(M_{p'},
\xi_{p'})$ if and only if $p=p'$.
\end{theorem}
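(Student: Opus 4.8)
The plan is to settle the diffeomorphism classification first, and then the contact classification via the first Chern class.

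\smallskip
\emph{Diffeomorphism type.} By Lemma~\ref{lemma:bar}, $M_p$ is diffeomorphic to $S^2\times S^3\#S^2\times S^3$ or to $S^2\times S^3\#S^2\tilde\times S^3$, and in Barden's list these two are told apart by their second Stiefel--Whitney class: the first is spin, the second is not. So I would compute $w_2(M_p)$. Since $W_p=X_p\times D^2$ is a filling and $TD^2$ is trivial, $w_2(TW_p)=\mathrm{pr}^*w_2(TX_p)$; and running the argument of Lemma~\ref{chern} with $\mathbb Z/2$ coefficients — subcritical Weinstein domains have vanishing homology in degrees $\geq n=3$ over any coefficient ring — shows $i^*\colon H^2(W_p;\mathbb Z/2)\to H^2(M_p;\mathbb Z/2)$ is an isomorphism. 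Hence $w_2(M_p)=0$ iff $w_2(X_p)=0$, and for a simply-connected $4$-manifold with homology-sphere boundary this happens exactly when the intersection form is even, i.e.\ (by \S\ref{exot}) exactly when $p$ is odd. The same argument applied to $X$, whose form is even, shows $M$ is spin, so $M\cong S^2\times S^3\#S^2\times S^3\cong M_p$ for odd $p$. This gives the first two assertions.

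\smallskip
\emph{Contact type.} One direction is trivial: $p=p'$ forces $(M_p,\xi_p)=(M_{p'},\xi_{p'})$. Conversely, assume these contact manifolds are contactomorphic. If $p\not\equiv p'\pmod 2$ their underlying manifolds are already non-diffeomorphic by the previous paragraph, so we may assume $p\equiv p'\pmod 2$. A contactomorphism is in particular a diffeomorphism $f\colon M_p\to M_{p'}$ with $f^*c_1(\xi_{p'})=c_1(\xi_p)$, so it suffices to extract from $c_1(\xi_p)$ a $\mathrm{Diff}(M_p)$-invariant that is injective in $p$ on each parity class. Lemma~\ref{chern} (applicable for $n=3$ since $W_p$ is subcritical) identifies $c_1(\xi_p)$ with $c_1(TW_p)=\mathrm{pr}^*c_1(TX_p)$ via the isomorphism $i^*\colon H^2(W_p;\mathbb Z)\to H^2(M_p;\mathbb Z)$, and $c_1(TX_p)\in H^2(X_p;\mathbb Z)\cong\mathbb Z^2$ is read off from the Legendrian handlebody picture of Figure~\ref{fig:stein} by Gompf's rotation-number formula, its value on the class carried by a $2$-handle being the rotation number of that handle's Legendrian attaching circle. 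Since $H^2(M_p;\mathbb Z)\cong\mathbb Z^2$ carries the trivial cup product, the $\mathrm{Diff}(M_p)$-action on it factors through $GL_2(\mathbb Z)$, whose orbits on $\mathbb Z^2$ are detected by divisibility; so the candidate invariant is $d_p:=\gcd$ of the coordinates of $c_1(\xi_p)$, equivalently $\gcd$ of the relevant rotation numbers. I would then compute these rotation numbers explicitly — tracking them through the $1$-/$2$-handle cancellations and the $p$-fold log transform — and check that $d_p$ is strictly monotone, hence injective, on $\{p\text{ odd}\}$ and on $\{p\text{ even}\}$; combined with the parity dichotomy this yields $p=p'$.

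\smallskip
For the explicit identification promised in the introduction I would instead translate the Weinstein handlebody diagram of $W_p=X_p\times D^2$ into the diagrammatic calculus of \cite{dgvk} and name the contact structure directly on $S^2\times S^3\#S^2\times S^3$ (resp.\ $S^2\times S^3\#S^2\tilde\times S^3$). The main obstacle in either route is the same: carrying out the rotation-number bookkeeping correctly and confirming that the value of $c_1(\xi_p)$, viewed modulo the diffeomorphism-group action on $H^2(M_p)\cong\mathbb Z^2$, genuinely separates all the $\xi_p$ of a fixed parity — if it does not (say its divisibility turns out bounded, or $c_1$ vanishes identically), one would have to bring in a secondary homotopy-theoretic invariant of the underlying almost contact structure.
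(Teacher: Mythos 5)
Your overall route is the paper's route: compute $c_1(\xi_p)$ by identifying it with $c_1(TX_p)$ through Lemma~\ref{chern} and Gompf's rotation-number formula, extract a diffeomorphism-invariant (divisibility) from it, and settle the smooth classification by deciding when $M_p$ is spin and invoking Lemma~\ref{lemma:bar}. The smooth part of your argument is complete and correct (the paper reads off spin-ness from the parity of $c_1$ rather than from the parity of the intersection form of $X_p$, but these are equivalent here), and your remark that one must quotient by the $\mathrm{Diff}$-action on $H^2(M_p)\cong\mathbb{Z}\oplus\mathbb{Z}$, so that only the divisibility of $c_1(\xi_p)$ (and the parity information) is usable, is actually spelled out more carefully than in the paper.

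The genuine gap is in the contact classification: you never compute $c_1(\xi_p)$, you only promise to ``compute these rotation numbers explicitly \dots and check that $d_p$ is strictly monotone,'' and you even flag that the whole approach collapses if the divisibility turns out to be bounded. That computation is the heart of the theorem, and the paper does it: with the Akbulut--Yasui basis $T_p=[\gamma_p]$ and $S_p$ of $H_2(X_p)$ one finds $\langle c_1(TX_p),S_p\rangle=-1-p$ and $\langle c_1(TX_p),T_p\rangle=0$, so via Lemma~\ref{chern} the class $c_1(\xi_p)$ is $(-1-p,0)$ in the corresponding basis of $H^2(M_p)$, has divisibility $p+1$, and hence separates all the $\xi_p$ (and its mod $2$ reduction gives the spin dichotomy). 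Note also that the bookkeeping is slightly subtler than ``rotation number of each $2$-handle'': the diagram of $X_p$ in Figure~\ref{fig:stein} contains a $1$-handle, so the individual attaching circles $\alpha_p,\beta_p$ do not carry homology classes; one must work with the classes $T_p$ and $R_p=[\alpha_p]-p[\beta_p]$ (and then $S_p$), which is exactly what the paper does. Without this explicit evaluation your text is a plausible plan rather than a proof of the statement ``$(M_p,\xi_p)\cong(M_{p'},\xi_{p'})$ iff $p=p'$.''
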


\begin{proof}

We know that the first Chern class of any Stein surface  can be
calculated using an explicit Legendrian handlebody diagram
representing the surface \cite[Prop. 2.3]{go}. Let $\alpha_p,
\beta_p$, and $\gamma_p$ be the Legendrian curves in the handlebody
diagram for $X_p$ depicted in Figure~\ref{fig:stein} and let
$T_p=[\gamma_p]$ and $R_p= [\alpha_p] - p [\beta_p] \in H_2(X_p) $.
As it was observed in \cite{ay}, $H_2(X_p)$ has a basis consisting
of $T_p$ and $S_p$, where
\begin{displaymath}
   S_p = \left\{
     \begin{array}{lr}
       R_{q-1} + \big((2q-1)^2-q+1\big)T_{2q-1}  & : p=2q-1 \\
       R_{2q} + \big((2q)^2-q+1\big)T_{2q}  & : p=2q
     \end{array}
   \right.
\end{displaymath}
It is easy to see that $c_1(TX_p)$ evaluates on these homology
classes as:
$$
\langle c_1(TX_p),S_p\rangle=-1-p,\quad \langle
c_1(TX_p),T_p\rangle=0.
$$

The first Chern class $c_1 (\xi_p) $ can be viewed as a linear map
from $H_2(M_p)\cong \mathbb{Z} \oplus \mathbb{Z}$ to $\mathbb{Z}$.
The cycles $S_p \times \{1\}$, and  $T_p \times \{1\}$  in $M_p=
\partial (X_p\times D^2)$, where we think of $1\in \partial D^2$,
generate $H_2(M_p)$.  Using  Lemma~\ref{chern}, we conclude that
$c_1 (\xi_p)$ evaluates as $(-1-p,0)$ with respect to the chosen
basis of $H_2(M_p)$, which is sufficient to mutually distinguish
$\xi_p$'s.

The calculation above shows that $M_p$ is spin for odd $p$ and
non-spin otherwise. We conclude, by Lemma~\ref{lemma:bar}, that
$M_p$  is diffeomorphic to $S^2\times S^3 \# S^2\times S^3$ for odd
$p$, and to $S^2 \times S^3 \# S^2 \tilde \times S^3$, otherwise.
\end{proof}

We can say more explicitly what contact manifold we get with the following lemma.
\begin{lemma}\cite[Prop. 4.5]{dgvk}
\label{lemma:smooth_type} Suppose that $V$ is a Stein surface
obtained by attaching a single $2$-handle to the standard Stein
domain $D^4$ along a Legendrian knot $\delta$ in the standard tight
$S^3$. Then $\OB(V,\id)$ is diffeomorphic to
\begin{itemize}
\item $S^2\times S^3$ if $rot(\delta)$ is even
\item $S^2\tilde \times S^3$ if $rot(\delta)$ is odd.
\end{itemize}
Moreover, if $V'$ is another Stein surface obtained as above using a
Legendrian knot $\delta'$, then $\OB(V,\id)$ and $\OB(V',\id)$ are
contactomorphic if and only if $|rot(\delta)|=|rot(\delta')|$.
\end{lemma}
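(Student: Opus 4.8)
The plan is to deduce the diffeomorphism statement from Barden's classification together with the first Chern class computation already used in the proof of Theorem~\ref{mainthm}, and to deduce the contactomorphism statement from flexibility of subcritical Weinstein domains.

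First I would record the relevant topology. Since $V=D^4\cup h$ is built from the single Weinstein $2$-handle $h$ attached along the null-homotopic circle $\delta\subset\partial D^4$, it is homotopy equivalent to $S^2$, with $H_2(V)\cong\Z$ generated by the class $\sigma$ obtained from a Seifert surface of $\delta$ capped off by the core of $h$; by \cite[Prop. 2.3]{go} one has $\langle c_1(TV),\sigma\rangle=rot(\delta)$. As in Section~\ref{sec:Contopen}, $M:=\OB(V,\id)$ equals $\partial(V\times D^2)$, and $V\times D^2$ is a subcritical Weinstein domain. By the lemma on fundamental groups (applied with $n=3$) $M$ is simply connected, and inserting $H_*(V\times D^2)\cong H_*(S^2)$ into the long exact sequence of the pair $(V\times D^2,M)$ — using $H^{\ge 3}(V\times D^2)=0$, exactly as in the proof of Lemma~\ref{chern} — gives $H_2(M)\cong\Z$. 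By Barden's classification (as invoked in Lemma~\ref{lemma:bar}) $M$ is therefore diffeomorphic to $S^2\times S^3$ or to $S^2\tilde\times S^3$, the only two simply-connected closed $5$-manifolds with $H_2\cong\Z$, distinguished by whether $M$ is spin. (Equivalently, and more directly: since a circle in $S^5$ is unknotted, $V\times D^2=D^6\cup h'$ is a $D^4$-bundle over $S^2$, trivial or not according to a framing in $\pi_1(SO(4))\cong\Z/2$.)

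Next I would settle the spin type. As $V\times D^2$ is Weinstein it carries an almost complex structure, so $w_2\bigl(T(V\times D^2)\bigr)$ is the mod-$2$ reduction of $c_1\bigl(T(V\times D^2)\bigr)$, which is the pullback of $c_1(TV)$ under the projection to $V$; restricting to $M$, whose normal bundle in $V\times D^2$ is trivial, gives $w_2(M)=i^*c_1(TV)\bmod 2$. By Lemma~\ref{chern}, valid here because $V\times D^2$ is subcritical, $i^*\colon H^2(V\times D^2)\to H^2(M)$ is an isomorphism, and likewise with $\Z/2$ coefficients, so $w_2(M)=0$ if and only if $\langle c_1(TV),\sigma\rangle=rot(\delta)$ is even. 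This proves the first assertion. The same computation, now over $\Z$, identifies $c_1(\xi)$ under the isomorphism $H^2(M)\cong H^2(V)\cong\Z$ with $rot(\delta)$, well defined only up to the sign coming from the absence of a canonical generator of $H^2(M)$; hence $|rot(\delta)|$ is an invariant of the contactomorphism type of $(M,\xi)$. Since a contactomorphism is in particular a diffeomorphism carrying $c_1$ to $c_1$, this already gives the ``only if'' half of the final assertion.

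For the ``if'' half — the part I expect to be the real work — suppose $|rot(\delta)|=|rot(\delta')|$. Then $M$ and $M'$ are diffeomorphic, and after composing with a self-diffeomorphism acting by $\pm1$ on $H^2$ if necessary (such a map exists, being realized on a $D^4$-bundle over $S^2$ by a reflection of the base and a compensating reflection in a fiber coordinate), one gets a diffeomorphism $V\times D^2\to V'\times D^2$ matching $c_1$. I would then invoke flexibility: $V\times D^2$ and $V'\times D^2$ are subcritical, hence flexible, Weinstein domains (cf.\ \cite{ce}), each presented as $D^6$ with a single Weinstein handle of index $2$; by the Cieliebak--Eliashberg theorem, two flexible Weinstein domains that are diffeomorphic and whose almost Weinstein structures are homotopic are Weinstein homotopic, and on a domain of the homotopy type of $S^2$ the homotopy class of the almost Weinstein structure is detected by $c_1$. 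Hence $V\times D^2$ and $V'\times D^2$ are Weinstein homotopic, and in particular $(M,\xi)\cong(M',\xi')$. An alternative, more hands-on route is to argue inside $\OB(D^4,\id)=(S^5,\xi_{\mathrm{std}})$: attaching $h$ to the page corresponds to a subcritical contact surgery of $S^5$ along an isotropic circle $\Lambda_\delta$ determined by $\delta$, and because isotropic circles in a contact $5$-manifold satisfy Gromov's $h$-principle, the surgered manifold depends only on the formal isotropic data of $\Lambda_\delta$, which one checks is captured by $|rot(\delta)|$. Either way, the only nontrivial input is the flexibility statement; everything else is bookkeeping with the exact sequences and Chern classes set up above, and the main obstacle is to phrase that flexibility input precisely enough — matching smooth handle decompositions and homotopy classes of almost contact data, and keeping track of the $\pm1$ ambiguity in $H^2$ — that it applies cleanly.
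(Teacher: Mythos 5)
This statement is quoted verbatim from \cite[Prop.\ 4.5]{dgvk}; the paper itself offers no proof, so there is no internal argument to compare yours against. Judged on its own, your outline is essentially correct and is a reasonable reconstruction of how such a result is proved. The smooth part (Barden plus the computation that $w_2(M)$ is the mod $2$ reduction of $i^*c_1(TV)$, with $\langle c_1(TV),\sigma\rangle=rot(\delta)$ by Gompf) mirrors exactly the style of Lemma~\ref{lemma:bar} and Theorem~\ref{mainthm} in the paper, and the ``only if'' half of the contact statement is the same $c_1(\xi)$ argument, with the unavoidable $\pm 1$ ambiguity correctly accounted for. The real content is your ``if'' half, where you invoke the Cieliebak--Eliashberg h-principle: subcritical (hence flexible) Weinstein domains of dimension $\geq 6$ that are diffeomorphic with homotopic nondegenerate $2$-form (equivalently almost complex) data are Weinstein homotopic, hence have contactomorphic boundaries. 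That is a legitimate mechanism and, in spirit, the one underlying the quoted result; \cite{dgvk} packages it diagrammatically (their moves I and II, which this paper uses in Section 4, reduce any such handle diagram to a standard Legendrian unknot with the same rotation number), so your route is more ``h-principle first'' while theirs is more combinatorial, but they buy the same thing. The points you flag as remaining details are genuine but routine: you need an orientation-preserving self-diffeomorphism of the (possibly twisted) $D^4$-bundle over $S^2$ acting by $-1$ on $H^2$ (for the twisted bundle this exists because conjugating the clutching loop by a reflection does not change its class in $\pi_1(SO(4))\cong\Z/2$); you need that over a complex homotopy equivalent to $S^2$ with torsion-free $H^2$ the homotopy class of the almost complex structure is indeed determined by $c_1$ (true by obstruction theory with fiber $SO(6)/U(3)\cong\C P^3$); and you need that the domains, not just the boundaries, are diffeomorphic, which follows since equal parity of $rot$ forces the two $D^4$-bundles to have the same $w_2$, hence the same clutching class. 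None of these is a gap in the sense of a missing idea.
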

For each integer $k$, choose a Legendrian unknot $\delta_k$ in the
tight $S^3$ with rotation number equal to $k$. We view this $S^3$ as
the boundary of the Stein domain $D^4$. Define $V_k$ as the
handlebody obtained by attaching a Weinstein $2$-handle to $D^4$
along $\delta_k$.

Then $\partial( V_k \times D^2)$ is an $S^3$-bundle over $S^2$, and
it is diffeomorphic to $S^2\times S^3$ if $k$ is even and to
$S^2\tilde \times S^3$ if $k$ is odd. Up to contactomorphism, the
contact structure only depends on $|k|$ by
Lemma~\ref{lemma:smooth_type}. We denote the resulting contact
structure by $\zeta_k$.

\begin{proposition} The contact $5$-manifold $(M_p, \xi_p)$ is
contactomorphic to
\begin{itemize}
\item $(S^2\times S^3,\zeta_0)\# (S^2\times S^3,\zeta_{p+1})$ if $p$ is odd
\item $(S^2\times S^3,\zeta_0)\# (S^2\tilde \times S^3,\zeta_{p+1})$ if $p$ is even.
\end{itemize}
\end{proposition}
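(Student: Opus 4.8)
The plan is to identify the subcritical Weinstein filling $W_p=X_p\times D^2$ of $(M_p,\xi_p)$ with a boundary connected sum of two standard pieces and then read off the connected sum decomposition of its boundary. For each $k$ the Stein domain $V_k\times D^2$ is a subcritical Weinstein filling of $\OB(V_k,\id)$, which is a sphere bundle over $S^2$ carrying the contact structure $\zeta_k$ (Lemma~\ref{lemma:smooth_type}); in particular $\partial(V_0\times D^2)=(S^2\times S^3,\zeta_0)$ since $rot(\delta_0)=0$. I would first record the formal identity $(A\,\natural\,B)\times D^2=(A\times D^2)\,\natural\,(B\times D^2)$ for boundary connected sums, so that $(V_0\,\natural\,V_{p+1})\times D^2=(V_0\times D^2)\,\natural\,(V_{p+1}\times D^2)$; since the contact boundary of a Weinstein boundary connected sum is the contact connected sum of the boundaries, $\partial\big((V_0\,\natural\,V_{p+1})\times D^2\big)$ is $\OB(V_0,\id)\#\OB(V_{p+1},\id)=(S^2\times S^3,\zeta_0)\#(Y,\zeta_{p+1})$, with $Y=S^2\times S^3$ when $p+1$ is even, i.e. $p$ odd, and $Y=S^2\tilde\times S^3$ when $p+1$ is odd, i.e. $p$ even, by Lemma~\ref{lemma:smooth_type}. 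This is precisely the right-hand side of the two cases in the proposition, so the whole statement follows once one knows that $W_p$ is Weinstein deformation equivalent to $(V_0\,\natural\,V_{p+1})\times D^2$.

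To establish that equivalence I would appeal to the flexibility of subcritical Weinstein domains. Both domains are Weinstein $6$-manifolds assembled from handles of index at most $2$, hence subcritical, and both are homotopy equivalent to $S^2\vee S^2$; being homotopy equivalent to a $2$-complex, their almost complex structures are classified up to homotopy by the first Chern class. By Lemmas~\ref{lemma:c1_strong_filling} and~\ref{chern} the relevant formal invariant of $W_p$ is $c_1(TW_p)=c_1(TX_p)$, which the computation in the proof of Theorem~\ref{mainthm} evaluates as $(-1-p,0)$ on a basis of $H_2(W_p)\cong\Z\oplus\Z$; on the other domain $c_1$ evaluates as $(0,p+1)$ on the basis coming from the two $2$-handles (Gompf's rotation-number formula), and these two functionals on $\Z\oplus\Z$ are carried to one another by an element of $GL_2(\Z)$, using also that $\zeta_k$ depends only on $|k|$. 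The classification of subcritical Weinstein structures due to Cieliebak--Eliashberg then gives the desired Weinstein homotopy, hence a contactomorphism of boundaries.

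The main obstacle is this last step. To invoke the subcritical classification in its usual form one needs the underlying smooth $6$-manifolds to be identified by a diffeomorphism matching the formal data; this should be available because crossing the exotic $4$-manifold $X_p$ with $D^2$ destroys the exotic smooth structure — the resulting simply-connected $6$-manifolds with boundary being rigid enough by Wall-type surgery theory, since all their characteristic classes vanish or agree — but pinning this down is the delicate point. One can instead build both domains by the same sequence of subcritical handle attachments: cancel the leftover $1$-handle in the Legendrian diagram of $X_p$ in Figure~\ref{fig:stein} (it is geometrically cancelled by one of the $2$-handles), cross with $D^2$ so that the two remaining handles become subcritical, and use that attaching a subcritical Weinstein handle along a formally isotopic isotropic sphere yields a Weinstein-equivalent result in order to split and standardize them. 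This route avoids the global diffeomorphism question, but then it requires care that the ensuing handle slides and stabilizations genuinely produce Legendrian unknots of rotation numbers $0$ and $p+1$ rather than some other values; the $c_1$ computation of Theorem~\ref{mainthm} is the essential consistency check here.
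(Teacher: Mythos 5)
Your first route is genuinely different from the paper's: you want to recognize $W_p=X_p\times D^2$ as $(V_0\natural V_{p+1})\times D^2$ up to Weinstein homotopy by invoking the Cieliebak--Eliashberg uniqueness of subcritical Weinstein structures \cite{ce}, with $c_1$ as the complete formal invariant over a homotopy $2$-complex. This can be made to work, but as written the pivotal step is asserted rather than proved. To apply the uniqueness theorem you must first exhibit a diffeomorphism $X_p\times D^2\cong(V_0\natural V_{p+1})\times D^2$ that pulls back one first Chern class to the other; observing that $(-1-p,0)$ and $(0,p+1)$ lie in the same $GL_2(\Z)$-orbit is not sufficient, and the appeal to ``Wall-type surgery theory'' is vague and not carried out. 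What is actually needed (and is provable) is more elementary: a simply connected $6$-dimensional domain built from handles of index at most $2$ has its $2$-handles attached along an unlink in $S^5$ with framings classified by $\pi_1(SO(4))\cong\Z/2$, hence is a boundary connected sum of $D^4$-bundles over $S^2$ determined by $w_2$; one must then still check that the specific automorphism of $H_2\cong\Z\oplus\Z$ carrying one $c_1$-functional to the other (a swap together with a sign change, compatible with $w_2$) is realized by an honest diffeomorphism. None of this is in your argument, and it is exactly where the content lies.

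Your fallback route is essentially the paper's own proof: the paper proves this proposition by the explicit computation in Proposition~\ref{prop:direct_moves}, starting from the Legendrian diagram of $X_p$ in Figure~\ref{fig:stein}, using moves I and II (legitimate here only because the monodromy is the identity, by the stabilization argument of \cite[Section 4.1]{dgvk}), Legendrian Reidemeister moves, a handle subtraction of $\alpha_p$ over $\beta_p$, and cancellation of the $1$-handle, ending with two Legendrian unknots of rotation numbers $0$ and $-1-p$, at which point Lemma~\ref{lemma:smooth_type} gives the statement. You defer precisely this hard part to a ``consistency check'' via $c_1$, but that check cannot substitute for the moves: $c_1(\xi_p)$ is only known up to automorphisms of $H_2(M_p)$, and, for example, the functionals $(0,p+1)$ and $(p+1,p+1)$ lie in the same orbit, so knowing $c_1$ does not by itself force the reduction to end at rotation numbers $0$ and $p+1$ (to rule out, or rather identify, such alternatives one is thrown back either on the explicit diagrammatic reduction or on the properly executed h-principle argument of your first route). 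So: the strategy via subcritical flexibility is a legitimate and genuinely different approach, but in the form given there is a real gap at its central step, namely the smooth identification matching the formal data, respectively the actual reduction of the diagram.
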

\begin{proof}
A complete argument is given in the proof of
Proposition~\ref{prop:direct_moves}. \end{proof}

\section{An alternative argument}
As in~\cite{dgvk} we encode a closed contact $5$-manifold (described
as a contact open book) by a handlebody diagram for its page---which
can be assumed to be a compact Stein domain. We will only consider
situations where the symplectic monodromy is a product of Dehn
twists along Lagrangian spheres.

\begin{definition}
Two Stein surfaces $X$ and $X'$ are called {\bf contact stably
equivalent} if there are handlebody diagrams for $X$ and $X'$, and a
third handlebody diagram for some Stein surface $X''$ with the
property that the handlebody diagrams for $X$ and $X'$ can be
transformed into the one for $X''$ by a finite sequence of the
following moves:
\begin{itemize}
\item usual handlebody moves for Stein surfaces (cf. \cite{go})
\item stabilizing the attaching circles for the $2$-handles (move I)
\item changing a crossing (move II)
\end{itemize}
\end{definition}
See Figure~\ref{fig:moves} for a description of moves I and II.
\begin{figure}[htp]
\def\svgwidth{1.0\textwidth}%
\begingroup\endlinechar=-1
\resizebox{1.0\textwidth}{!}{%
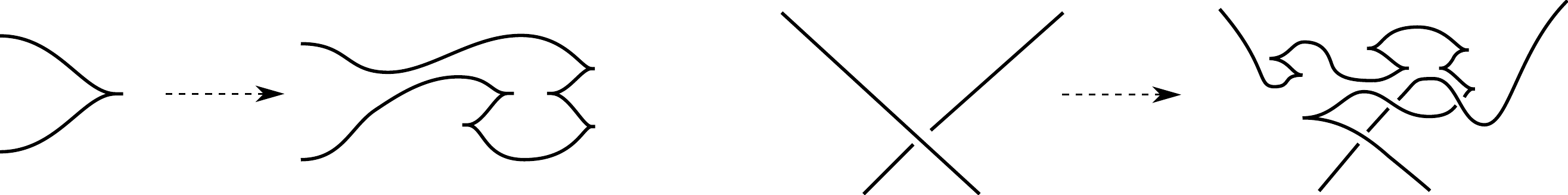%
}\endgroup \caption{Moves I and II give contactomorphic $5$-manifolds} \label{fig:moves}
\end{figure}

With the arguments from \cite[Section 4.1]{dgvk} we obtain the following proposition.
\begin{proposition}
Suppose that $X$ and $X'$ are contact stably equivalent. Then $X
\times D^2$ and $X' \times D^2$ are symplectically deformation
equivalent with contactomorphic boundaries.

\end{proposition}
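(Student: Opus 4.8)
The plan is to show that the two moves in the definition of contact stable equivalence --- stabilizing an attaching circle (move I) and changing a crossing (move II) --- each produce a Stein surface $X'$ such that $X'\times D^2$ is symplectically deformation equivalent to $X\times D^2$, with contactomorphic contact boundary. Since the usual handlebody moves for Stein surfaces already preserve the Stein deformation type of $X$ (hence of $X\times D^2$), composing finitely many such moves along the common target $X''$ will give the result by transitivity of deformation equivalence. So the real content is the analysis of moves I and II.

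For move I, stabilizing the attaching Legendrian circle $\delta$ of a $2$-handle of $X$ does not change $X$ as a Stein domain up to deformation (it only changes the front projection), so there is nothing to prove on the page level; the subtlety, as in \cite[Section 4.1]{dgvk}, is that one must check the convention for reading off the open book $\OB(X,\id)$ is insensitive to this, which follows from the fact that the symplectomorphism type of the page together with the (trivial) monodromy is all that enters Giroux's construction. For move II, changing a crossing of the attaching circle, the key point is the standard observation that in dimension five a crossing change of a $2$-dimensional attaching sphere can be realized by an ambient isotopy --- because a generic $2$-disk and the core of the $2$-handle meet transversally in a $4$-dimensional collar, and low-dimensional linking phenomena that obstruct crossing changes in $3$-manifolds disappear one dimension up. Concretely, one takes the self-intersection of the attaching sphere of the $2$-handle in $X\times D^2$ that is created or destroyed by the crossing change and removes it by a Whitney-type move, using that the relevant Whitney disk exists and can be made Lagrangian (or at least that the Weinstein handle attachment is unaffected up to deformation). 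This is exactly the geometric input behind \cite[Section 4.1]{dgvk}, and I would cite it rather than reprove it.

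Having established that each elementary move yields a symplectic deformation equivalence $X\times D^2 \simeq X'\times D^2$, I would then invoke that a symplectic deformation equivalence of Liouville (here subcritical Weinstein) domains restricts to a contactomorphism of the contact boundaries --- this is a Gray-stability / Moser-type argument: a smooth family $\omega_t$ of symplectic forms with a family of Liouville vector fields induces a family of contact forms on the boundary, and integrating the Moser vector field gives the contactomorphism. Combining this with the chain of deformations coming from the sequence of moves connecting $X$, $X'$ and $X''$ completes the proof.

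The main obstacle I expect is move II: making precise that a crossing change of the $2$-handle's attaching sphere is realized by a Liouville (Weinstein) deformation of $X\times D^2$, rather than merely a smooth isotopy. One must be careful that the Weinstein structure near the attaching region is not destroyed; the cleanest route is to appeal directly to the detailed analysis in \cite[Section 4.1]{dgvk}, where the compatibility of the crossing change with the subcritical Weinstein structure on $X\times D^2$ is worked out, so that here it suffices to assemble those pieces. The remaining steps --- the handlebody-move invariance and the boundary contactomorphism from symplectic deformation equivalence --- are routine.
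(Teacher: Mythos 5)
There is a genuine gap, and it sits exactly in the part you declared trivial. Your claim that move I ``does not change $X$ as a Stein domain up to deformation (it only changes the front projection)'' is false: stabilizing an attaching circle is not a Legendrian isotopy. It changes the Thurston--Bennequin invariant, hence the framing $tb-1$ with which the $2$-handle is attached, and therefore in general the diffeomorphism type of the page; similarly move II changes the knot type of the attaching circle and so changes the page. Indeed the whole point of the proposition is that the page may change drastically while $X\times D^2$ and its contact boundary do not --- in this paper the moves are used to turn $X_p$ into a handlebody built on two Legendrian unknots, which is certainly not deformation equivalent to $X_p$ itself. So there is genuinely something to prove ``on the page level,'' and your proposal supplies no mechanism for it. Your sketch for move II is also off target: the attaching spheres of the page's $2$-handles are circles, not $2$-spheres, so there is no double point of an attaching sphere to remove by a Whitney-type move, and this is not the content of \cite[Section 4.1]{dgvk}; citing that paper for ``a Lagrangian Whitney disk'' does not import the argument you need.

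The actual mechanism, summarized in the paper and proved in \cite[Section 4.1]{dgvk}, is a stabilize--slide--destabilize argument for the open book rather than an ambient isotopy or Whitney trick in the filling: one positively stabilizes $\OB(W,\id)$ by attaching a Weinstein $2$-handle along the boundary of a properly embedded Lagrangian disk in the page and composing the monodromy with a right-handed Dehn twist $\tau_L$ along the resulting Lagrangian sphere, so that by Giroux $\OB(\tilde W,\tau_L)$ is contactomorphic to $\OB(W,\id)$; one then slides the relevant page $2$-handle over the new handle --- this is what realizes moves I and II on the attaching circle --- and finally destabilizes by deleting the extra handle and the Dehn twist, a step that crucially uses that the monodromy is the identity. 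This is what shows the symplectic deformation type of $X\times D^2$ and the contactomorphism type of its boundary are preserved. Your final remarks (that usual Stein handlebody moves are harmless, that one concludes by transitivity through $X''$, and that a deformation of Liouville fillings gives contactomorphic boundaries by a Gray--Moser argument) are fine, but they do not substitute for this missing core.
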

We briefly summarize \cite[Section 4.1]{dgvk} and explain why moves I and II give rise to contactomorphic manifolds.
Given a contact open book $\OB(W,\id)$ we first positively stabilize the open book.
This is done by first taking any properly embedded Lagrangian disk $D$. We attach a
Weinstein $2$-handle along $\partial D$ to obtain a new page $\tilde W$, which contains a Lagrangian
sphere $L$ formed by gluing the core of the $2$-handle to the Lagrangian disk $D$.
We can then perform a right-handed Dehn twist along $L$.
We denote this Dehn twist by $\tau_L$.
According to Giroux, the contact open book $\OB(\tilde W,\tau_L)$ is contactomorphic to $\OB(W,\id)$.
We use the extra $2$-handle in the page to perform a handle slide.
We then destabilize the open book by simply not performing the Dehn twist and removing the extra $2$-handle.
Since the monodromy on $W$ was assumed to be the identity this last step can be done. See \cite[Section 4.1]{dgvk} for a detailed description of moves I and II and a proof that these moves do not change the symplectic deformation type of the filling.

The upshot is that move I changes the Thurston-Bennequin invariant of an attaching circle of a $2$-handle while preserving its rotation number.
Move II changes an overcrossing into an undercrossing.
This move can be used to change the knot type of the attaching circles.

\begin{proposition}
\label{prop:direct_moves} The contact $5$-manifolds $(M_p, \xi_p)$
and $(M_{p'}, \xi_{p'})$ are diffeomorphic if and only if $p \equiv
p'\;(mod \;2)$, and contactomorphic if and only if $p=p'$.
\end{proposition}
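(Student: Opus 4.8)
The plan is to rerun the classification through explicit handlebody moves, reducing the diagram of $X_p$ in Figure~\ref{fig:stein} to a standard form and then invoking Lemma~\ref{lemma:smooth_type}. The first ingredient is the observation that a boundary connected sum of pages yields a connected sum of the associated contact open books: $\OB(W_1\,\natural\,W_2,\id)$ is contactomorphic to $\OB(W_1,\id)\#\OB(W_2,\id)$, and correspondingly $(W_1\,\natural\,W_2)\times D^2$ is a subcritical Stein filling of that connected sum. Hence it suffices to prove that $X_p$ is contact stably equivalent to $V_0\,\natural\,V_{p+1}$, where $V_k$ is the single-$2$-handle Stein domain built from a Legendrian unknot of rotation number $k$ introduced before Lemma~\ref{lemma:smooth_type}.

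For this I would start from the Legendrian handlebody diagram for $X_p$ in Figure~\ref{fig:stein}, cancel the remaining $1$-/$2$-handle pair, and arrive at a diagram with exactly two $2$-handles attached along Legendrian knots in the standard tight $S^3$. Using move II (crossing changes) I would then unknot and unlink the two attaching circles; this is legitimate because, by the arguments summarized from \cite[Section 4.1]{dgvk}, move II yields contactomorphic $5$-manifolds, and because crossing changes do not alter rotation numbers. Using move I I would normalize the Thurston--Bennequin invariants so that each $2$-handle is attached along a Legendrian unknot in its standard stabilized form. Since moves I and II preserve the rotation numbers of the attaching circles, and since these rotation numbers are exactly what $c_1(TX_p)$ records on the two $2$-handle homology classes, the computation $\langle c_1(TX_p),T_p\rangle=0$ and $\langle c_1(TX_p),S_p\rangle=-1-p$ from the proof of Theorem~\ref{mainthm} forces the two resulting unknots to have rotation numbers $0$ and $\pm(p+1)$. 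Therefore $X_p$ is contact stably equivalent to $V_0\,\natural\,V_{p+1}$, which proves the explicit contactomorphism statement of the preceding Proposition (this is the ``complete argument'' alluded to there), namely that $(M_p,\xi_p)$ is $(S^2\times S^3,\zeta_0)\#(S^2\times S^3,\zeta_{p+1})$ for $p$ odd and $(S^2\times S^3,\zeta_0)\#(S^2\tilde\times S^3,\zeta_{p+1})$ for $p$ even.

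Both halves of the stated equivalence then follow formally. For the diffeomorphism claim, the connected sum $(S^2\times S^3)\#(S^2\times S^3\text{ or }S^2\tilde\times S^3)$ is spin precisely when $p+1$ is even, i.e.\ when $p$ is odd, so by Lemma~\ref{lemma:bar} the smooth type is determined by the parity of $p$; hence $M_p$ and $M_{p'}$ are diffeomorphic iff $p\equiv p'\ (\mathrm{mod}\ 2)$. For the contactomorphism claim, one factor is always $(S^2\times S^3,\zeta_0)$ and the other is $\zeta_{p+1}$, which by Lemma~\ref{lemma:smooth_type} is determined by $|p+1|$; since $p,p'\geq 1$, the equality $|p+1|=|p'+1|$ forces $p=p'$. (Equivalently, a contactomorphism $(M_p,\xi_p)\to(M_{p'},\xi_{p'})$ sends $c_1(\xi_{p'})$ to $\pm c_1(\xi_p)$ and so preserves the divisibility of $c_1$, which equals $p+1$ by the evaluation $(-1-p,0)$ computed above, again giving $p=p'$.)

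The main obstacle is the middle step: carrying out the handle cancellation and the explicit sequence of moves I and II on the diagram of $X_p$, and verifying that once the two attaching circles have been unknotted and unlinked one genuinely lands on unknots with rotation numbers $0$ and $p+1$, rather than some other pair of Legendrian knots realizing the same $c_1$. Keeping track of the rotation number through each stabilization and each crossing change is the delicate bookkeeping here, but the target values are pinned down in advance by the $c_1$ computation of Theorem~\ref{mainthm} together with the contact classification in Lemma~\ref{lemma:smooth_type}.
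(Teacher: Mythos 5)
Your overall strategy is the same as the paper's: reduce the Legendrian diagram of $X_p$ to a boundary connected sum $V_0\,\natural\,V_{p+1}$ by moves I, II and standard Stein handle moves, then invoke Lemma~\ref{lemma:smooth_type} (together with the fact that $\OB(W_1\natural W_2,\id)\cong \OB(W_1,\id)\#\OB(W_2,\id)$), and finish with $c_1$-divisibility for the non-contactomorphism direction. However, the step you defer --- identifying the rotation numbers of the two final unknots --- is exactly the content of the paper's proof, and your argument that these values are ``pinned down in advance'' does not work. First, you cannot simply ``cancel the remaining $1$-/$2$-handle pair'' and then work only with moves I and II: in Figure~\ref{fig:stein} the curve $\alpha_p$ runs over the $1$-handle, so freeing the $1$-handle requires repeated handle slides (handle subtractions) of $\alpha_p$ over $\beta_p$, and each such slide changes $rot(\alpha_p)$ (in the paper it drops by $1$ at each subtraction, ending at $-1-p$). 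So the premise that the relevant moves preserve rotation numbers fails precisely where the bookkeeping matters. Relatedly, the original rotation numbers are not $\langle c_1(TX_p),T_p\rangle$ and $\langle c_1(TX_p),S_p\rangle$: $S_p$ is a combination of handle classes, not the class of a single $2$-handle, so the evaluation $-1-p$ is not the rotation number of any attaching circle in the starting diagram.

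Second, the a priori constraint you invoke is too weak to force the outcome. Since moves I and II change the page (and its intersection form), the only invariant available is the contact manifold $(M_p,\xi_p)$ itself; if the final diagram consists of two unlinked Legendrian unknots with rotation numbers $a,b$, then $c_1(\xi_p)$ is expressed as $(a,b)$ in the corresponding basis of $H^2(M_p;\mathbb{Z})\cong\mathbb{Z}^2$, and comparison with $(-1-p,0)$ only yields $\gcd(|a|,|b|)=p+1$ (plus the parity/spin condition). Pairs such as $(p+1,2(p+1))$ are not excluded by this, and Lemma~\ref{lemma:smooth_type} classifies only pages with a single $2$-handle, so it cannot rule out such alternative two-summand outcomes in advance; appealing to it here is circular. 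This is why the paper carries out the explicit sequence of Legendrian Reidemeister moves, crossing changes, inverse stabilizations and handle subtractions in Figures~\ref{fig:contactomorphism} and~\ref{fig:strans}, tracking the rotation numbers through every step to land on unknots with rotation numbers $0$ and $-1-p$. Without that (or some substitute classification result for the connected-sum contact structures), your proof has a genuine gap at its central step; the surrounding reductions and the final deductions of the diffeomorphism and contactomorphism statements are fine.
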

\begin{proof}

In the following, we refer to \cite[Figures~3 and~9]{go} for
Legendrian Reidemeister moves. We consider the handlebody diagram of
$X_p$ in Figure~\ref{fig:stein} and apply a Legendrian isotopy to
obtain step 1 in Figure~\ref{fig:contactomorphism}. Then we apply
move II simultaneously to the two overcrossings in the shaded region
in step 1. Next we apply a Legendrian Reidemeister move 2 to the
indicated cusp in step 2 and another Legendrian Reidemeister move 2
to the indicated cusp in step 3. We do this twice more, and then
apply Legendrian Reidemeister move 4 to move the Legendrian knot
$\gamma_p$ off the $1$-handle. Finally, by several applications of
Reidemeister move 2 we obtain step 5 of
Figure~\ref{fig:contactomorphism}.

\begin{figure}[htp]
\def\svgwidth{0.7\textwidth}%
\begingroup\endlinechar=-1
\resizebox{0.7\textwidth}{!}{%
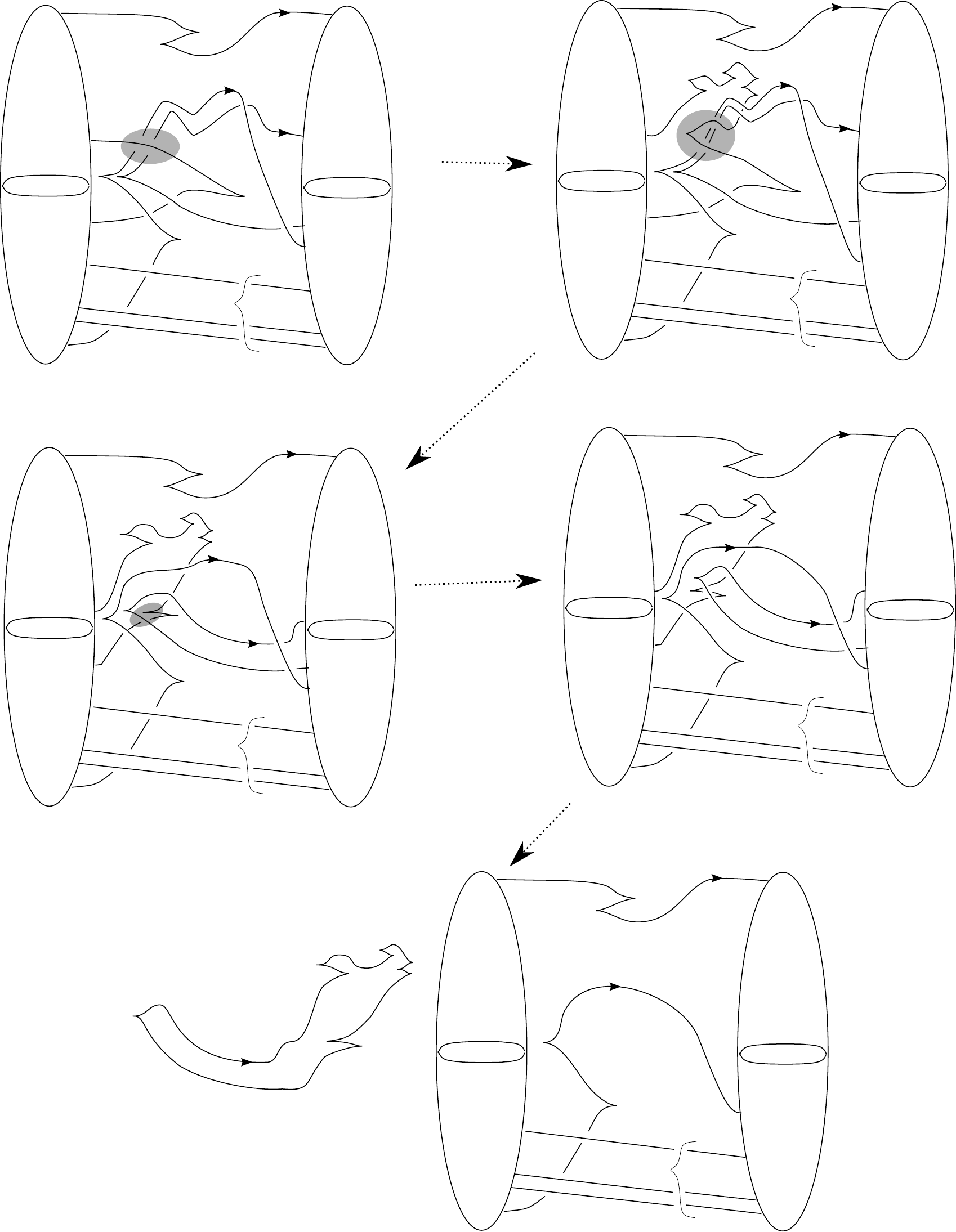%
}\endgroup \caption{Constructing a contactomorphism between $5$-manifolds} \label{fig:contactomorphism}
\end{figure}

We now perform some moves so that we will be able to cancel the
$1$-handle. Note that the dotted curve near $\beta_p$ depicted in
the initial diagram in Figure~\ref{fig:strans} is not part of the
handlebody and it will be used for a handle-slide. First we perform
an isotopy of $\alpha_p$, and then in step 2, we slide the
$\alpha_p$ handle over the $\beta_p$ handle using handle
subtraction. We keep calling the resulting curve the
$\alpha_p$-curve. Note that its rotation number has decreased by $1$
after the handle subtraction.

Now we use move II to unlink the curves $\alpha_p$ and $\beta_p$. We
get additional cusps which we remove with the inverse of move I.
This move keeps the rotation number unchanged. Now we perform
Legendrian Reidemeister move 4 to move one strand of the curve
$\alpha_p$ off the $1$-handle.

We repeat this procedure until there are no strands of $\alpha_p$
left which is going over the $1$-handle. To do this, we apply
Reidemeister move 2 to the indicated cusp and apply an isotopy to
move this cusp in the position of step 1. With move II we can make
the curve $\alpha_p$ into an unknot without changing the rotation
number, so we end up with an unknotted curve whose rotation number
is equal to $-1-p$. With Legendrian Reidemeister move 4 we make the
curve $\alpha_p$ disjoint from the $1$-handle, and then cancel the
$1$-handle with the $2$-handle attached along $\beta_p$. We end up
with a handlebody diagram containing only $2$-handles attached along
the curve  $\gamma_p$ with rotation number $0$ and to the curve
$\alpha_p$ which has rotation number $-1-p$.

Now apply Lemma~\ref{lemma:smooth_type} to see that $M_p$ is contactomorphic to
\begin{itemize}
\item $(S^2\times S^3,\zeta_0)\# (S^2\times S^3,\zeta_{p+1})$ if $p$ is odd
\item $(S^2\times S^3,\zeta_0)\# (S^2\tilde \times S^3,\zeta_{p+1})$ if $p$ is even.
\end{itemize}
By forgetting the contact structure we get the diffeomorphism claim.

\begin{figure}[htp]
\def\svgwidth{1.0\textwidth}%
\begingroup\endlinechar=-1
\resizebox{1.0\textwidth}{!}{%
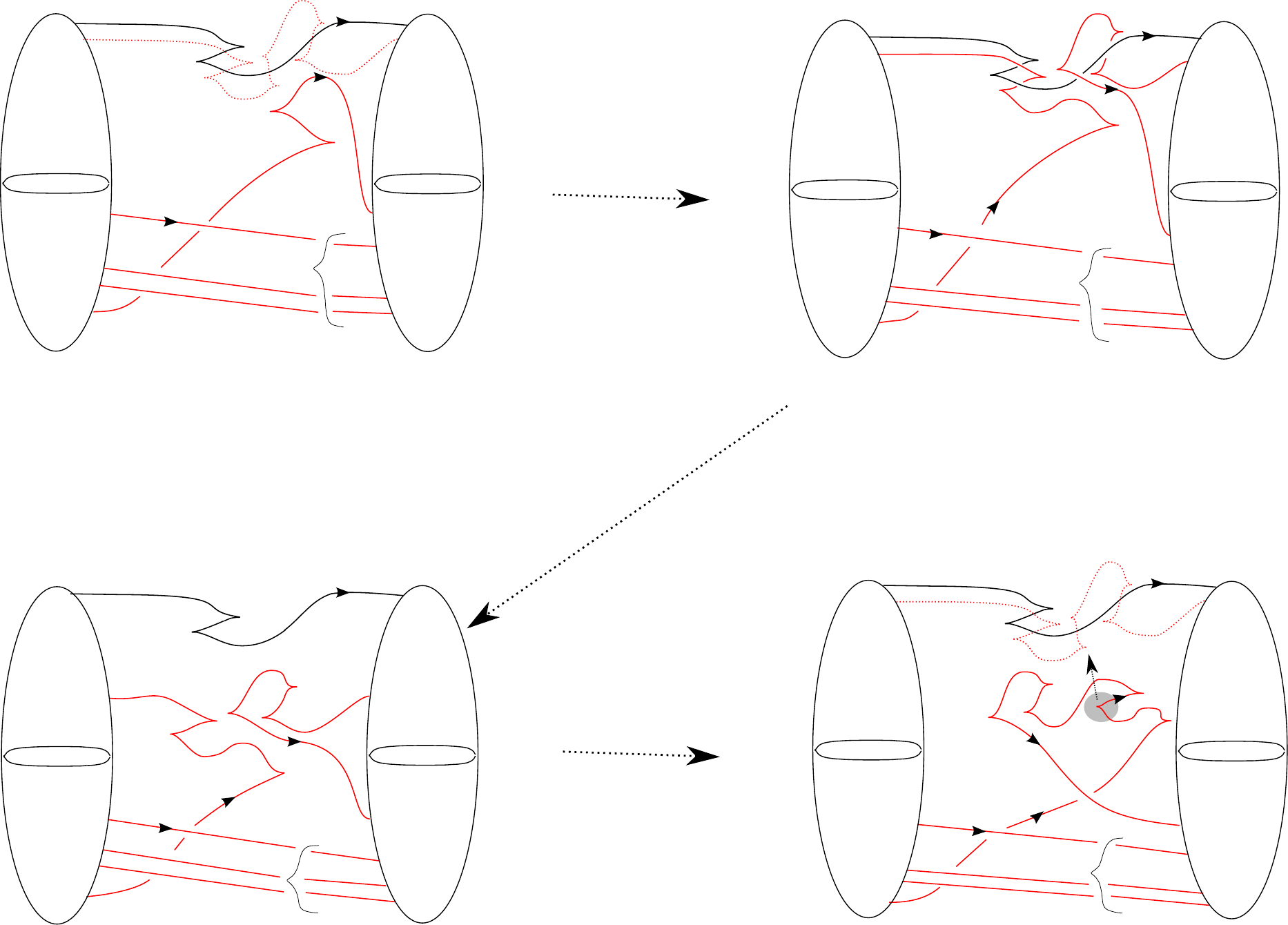%
}\endgroup \caption{Sliding strands off the $1$-handle} \label{fig:strans}
\end{figure}
\end{proof}

\end{document}